\numberwithin{equation}{section}
\newtheorem{theorem}{Theorem}[section]
\newtheorem{defi}{Definition}[section]
\newtheorem{conj}{Conjecture}[section]
\newtheorem{corollary}{Corollary}[section]
\newtheorem*{theorema}{Theorem A}
\newtheorem*{theoremb}{Theorem B}
\newtheorem*{theoremc}{Theorem C}
\newcommand{\N}{\mathbb{N}}
\newcommand{\Q}{\mathbb{Q}}
\newcommand{\G}{{G}}
\newcommand{\A}{\mathcal{A}}
\newcommand{\B}{\mathcal{B}}
\newcommand{\C}{\mathcal{C}}
\newcommand{\R}{\mathcal{R}}
\newcommand{\F}{\mathcal{F}}
\newcommand{\g}{\mathcal{G}}
\newcommand{\el}{\mathcal{L}} 
\title[Decompositions of sets of integers]{On additive and multiplicative decompositions of sets of integers\\ with restricted prime factors, II.\\ (Smooth numbers and generalizations.)}
\author[K. Gy\H{o}ry, L. Hajdu and A. S\'ark\"ozy]{K. Gy\H{o}ry, L. Hajdu and A. S\'ark\"ozy}
\thanks{Research supported in part by the NKFIH grants K115479, K119528, K128088, and K130909, and by the projects EFOP-3.6.1-16-2016-00022 and EFOP-3.6.2-16-2017-00015 of the European Union, co-financed by the European Social Fund.}
\subjclass[2010]{11P45, 11P70}
\keywords{Additive decompositions, multiplicative decompositions, smooth (friable) numbers, $S$-unit equations}
\address{K. Gy\H{o}ry
\hfill\break\indent L. Hajdu
\hfill\break\indent University of Debrecen, Institute of Mathematics
\hfill\break\indent H-4002 Debrecen, P.O. Box 400.
\hfill\break\indent Hungary}
\email{gyory@science.unideb.hu}
\email{hajdul@science.unideb.hu}
\address{A. S\'ark\"ozy
\hfill\break\indent E\"otv\"os Lor\'and University, Institute of Mathematics
\hfill\break\indent H-1117 Budapest, P\'azm\'any P\'eter s\'et\'any 1/C
\hfill\break\indent Hungary}
\email{sarkozy@cs.elte.hu}
\begin{document}

\begin{abstract}
In part I of this paper we studied additive decomposability of the set $\F_y$ of th $y$-smooth numbers and the multiplicative decomposability of the shifted set $\g_y=\F_y+\{1\}$. In this paper, focusing on the case of 'large' functions $y$, we continue the study of these problems. Further, we also investigate a problem related to the m-decomposability of $k$-term sumsets, for arbitrary $k$.
\end{abstract}

\date{\today}

\maketitle

\section{Introduction}

First we recall some notation, definitions and results from part I of this paper \cite{ghs} which we all also need here.

$\A,\B,\C,\hdots$ denote (usually infinite) sets of non-negative integers, and their counting functions are denoted by $A(X),B(X),C(X),\hdots$ so that e.g.
$$
A(X)=|\{a:a\leq X,\ a\in\A\}|.
$$
The set of the positive integers is denoted by $\N$, and we write $\N\cup\{0\}=\N_0$. The set of rational numbers is denoted by $\Q$.

We will need
\begin{defi}
\label{defi1}
Let $\G$ be an {\rm additive} semigroup and $\A,\B,\C$ subsets of $\G$ with
\begin{equation}
\label{eq11}
|\B|\geq 2,\ \ \ |\C|\geq 2.
\end{equation}
If
\begin{equation}
\label{eq12}
\A=\B+\C\ (=\{b+c:b\in\B,\ c\in\C\})
\end{equation}
then \eqref{eq12} is called an {\rm additive decomposition} or briefly {\rm a-decomposition} of $\A$, while if a {\rm multiplication} is defined in $\G$ and \eqref{eq11} and
\begin{equation}
\label{eq13}
\A=\B\cdot\C\ (=\{bc:b\in\B,\ c\in\C\})
\end{equation}
hold then \eqref{eq13} is called a {\rm multiplicative decomposition} or briefly {\rm m-decomposition} of $\A$.
\end{defi}

\begin{defi}
\label{defi2}
A finite or infinite set $\A$ of non-negative integers is said to be {\rm a-reducible} if it has an {\rm additive decomposition}
$$
\A=\B+\C\ \ \ \text{with}\ \ \ |\B|\geq 2,\ |\C|\geq 2
$$
(where $\B\subset\N_0$, $\C\subset\N_0$). If there are no sets $\B,\C$ with these properties then $\A$ is said to be {\rm a-primitive} or {\rm a-irreducible}.
\end{defi}

\begin{defi}
\label{defi3}
Two sets $\A,\B$ of non-negative integers are said to be {\rm asymptotically equal} if there is a number $K$ such that $\A\cap[K,+\infty)=\B\cap [K,+\infty)$ and then we write $\A\sim\B$.
\end{defi}

\begin{defi}
\label{defi4}
An infinite set $\A$ of non-negative integers is said to be {\rm totally a-primitive} if every $\A'$ with $\A'\subset \N_0$, $\A'\sim\A$ is {\rm a-primitive}.
\end{defi}

The multiplicative analogs of Definitions \ref{defi2} and \ref{defi4} are:

\begin{defi}
\label{defi5}
If $\A$ is an infinite set of {\rm positive} integers then it is said to be {\rm m-reducible} if it has a {\rm multiplicative decomposition}
$$
\A=\B\cdot\C\ \ \ \text{with}\ \ \ |\B|\geq 2,\ |\C|\geq 2
$$
(where $\B\subset\N$, $\C\subset\N$). If there are no such sets $\B,\C$ then $\A$ is said to be {\rm m-primitive} or {\rm m-irreducible}.
\end{defi}

\begin{defi}
\label{defi6}
An infinite set $\A\subset\N$ is said to be {\rm totally m-primitive} if every $\A'\subset \N$ with $\A'\sim\A$ is {\rm m-primitive}.
\end{defi}

\begin{defi}
\label{defi7}
Denote the greatest prime factor of the positive integer $n$ by $p^+(n)$. Then $n$ is said to be smooth (or friable) if $p^+(n)$ is ''small'' in terms of $n$. More precisely, if $y=y(n)$ is a monotone increasing function on $\N$ assuming positive values and $n\in\N$ is such that $p^+(n)\leq y(n)$, then we say that $n$ is $y$-smooth, and we write $\F_y$ ($\F$ for ''friable'') for the set of all $y$-smooth positive integers.
\end{defi}

Starting out from a conjecture of the third author \cite{sa} and a related partial result of Elsholtz and Harper \cite{eh}, in \cite{ghs} we proved the following two theorems:

\begin{theorema}
\label{thma}
If $y(n)$ is an increasing function with $y(n)\to\infty$ and
\begin{equation}
\label{eq15}
y(n)<2^{-32}\log n\ \ \ \text{for large}\ n,
\end{equation}
then the set $\F_y$ is totally {\rm a-primitive}.
\end{theorema}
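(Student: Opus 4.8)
The plan is to argue by contradiction, combining the extreme sparsity of $y$-smooth numbers with the finiteness (and counting) theory of $S$-unit equations.

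\textbf{Reduction to shifted pairs.} Suppose $\F_y$ is not totally a-primitive: there is $\A'\subseteq\N_0$ with $\A'\sim\F_y$ and $\A'=\B+\C$, $|\B|\ge 2$, $|\C|\ge 2$. Since $\A'$ is infinite, at least one of $\B,\C$ is infinite, and after relabelling the summands we may assume $\C$ is. Fix $b_1<b_2$ in $\B$ and put $d=b_2-b_1\ge 1$. For each $c\in\C$ both $b_1+c$ and $b_2+c$ lie in $\A'$, hence in $\F_y$ once $c$ is large, so there are infinitely many integers $n$ (namely $n=b_1+c$) for which $n$ and $n+d$ are both $y$-smooth. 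Moreover, if $\B$ is finite, say $|\B|=t$, then $\A'\subseteq\bigcup_{b\in\B}(\C+b)$ forces $C(X)\ge A'(X)/t$, so $\C$ — and hence the set of such $n$ — has counting function $\ge(1+o(1))\,\Psi^*(X)/t$, where $\Psi^*$ denotes the counting function of $\F_y$.

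\textbf{Upper bound.} One then estimates $N_d(X):=\#\{n\le X:\ n,\,n+d\in\F_y\}$ from above. If $n\le X$ is large and $y$-smooth then $p^+(n)\le y(n)<2^{-32}\log n$, and likewise $p^+(n+d)<2^{-31}\log X$; so $n$ and $n+d$ are both $Q$-smooth for some $Q=Q(X)\le 2^{-31}\log X$. Writing $g=\gcd(n,n+d)\mid d$, $n=gm$, $n+d=gm'$, the pair $(m,m')$ consists of coprime $Q$-smooth integers with $m'-m=d/g$; for $X$ large $d/g$ is itself $Q$-smooth, so this becomes the $S$-unit equation $\frac{m'}{m}-\frac{d/g}{m}=1$ over the subgroup of $\Q^{*}$ generated by $-1$ and the primes $\le Q$, which has rank $\pi(Q)$. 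By the Beukers–Schlickewei bound it has at most $C_0^{\pi(Q)}$ solutions for an absolute $C_0$, whence $N_d(X)\le\tau(d)\,C_0^{\pi(Q(X))}$, i.e. $N_d(X)\le X^{O(1/\log\log X)}$ with the implied constant controlled by $2^{-32}$.

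\textbf{Lower bound and contradiction.} It remains to show $\F_y$ is nonetheless too large for $\Psi^*(X)/t\le N_d(X)$ to hold at every scale. This uses elementary estimates for $\Psi(X,Q)$ with $Q\asymp\log X$ — this range lies outside the Dickman regime, so one counts exponent vectors directly, getting $\log\Psi(X,c\log X)=\Theta\big(c\,\frac{\log X}{\log\log X}\max(1,\log\log\log X)\big)$ — together with the structural fact that, with $N_k:=\min\{n:\ y(n)\ge k\}$, one has $\F_y\cap[N_k,N_{k+1})=\{k\text{-smooth numbers}\}\cap[N_k,N_{k+1})$ while $N_{k+1}>e^{2^{32}(k+1)}$ by the hypothesis on $y$. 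Thus $\F_y$ contains, for every large $k$, all $k$-smooth numbers in a range reaching up to $e^{2^{32}(k+1)}$ (when $y$ grows quickly) or all $k$-smooth numbers in an extremely long dyadic block (when $y$ grows slowly — in which case the relevant primes are fixed, equal to those $\le k$, and the $S$-unit equation has boundedly many solutions \emph{in all of} $\N$, not merely in $[N_k,N_{k+1})$). Comparing the lower bound $\Psi^*(X)/t$ (or the corresponding count inside $[N_k,N_{k+1})$) with $N_d(X)\le\tau(d)C_0^{\pi(Q)}$ at a suitably chosen scale yields $(2^{c})^{\pi(Q)}\le\mathrm{const}$ with $c>0$, which is impossible since $\pi(Q)\to\infty$.

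\textbf{The main obstacle.} The delicate point is making this last step uniform in the function $y$: a fast-growing $y$ (where $\Psi^*(X)$ is comparatively large, but the exponent in $\Psi(X,2^{-32}\log X)$ is tiny) and a slowly-growing $y$ (where $\F_y$ is, on long blocks, a genuine fixed smooth-number set, so true $S$-unit finiteness applies, but $\Psi^*$ is small) force the contradiction to surface at vastly different scales, so one needs a careful case analysis and an explicit choice of scale $X$ (depending on $d$ and $t$). The particular value $2^{-32}$ is exactly what makes the exponent coming from $\Psi(X,2^{-32}\log X)$ dominate the one from the Beukers–Schlickewei count $C_0^{\pi(2^{-31}\log X)}$ at this crossover. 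Finally, the case in which $\B$ and $\C$ are both infinite is handled by the same mechanism once one observes that then each of $\B,\C$ is, up to a bounded translation, a subset of $\F_y$.
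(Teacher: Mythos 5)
Your overall strategy -- fix $b_1<b_2\in\B$, observe that every large $c\in\C$ produces a pair $n,\,n+d\in\F_y$ with $d=b_2-b_1$ fixed, bound the number of such pairs by the explicit Beukers--Schlickewei/Evertse count $C_0^{\pi(Q)}$ for the $S$-unit equation obtained from $(n+d)-n=d$, and play this off against a lower bound for the counting function of $\F_y$ -- is exactly the right mechanism for a result of this shape (note that Theorem A is only quoted here; its proof is in part I \cite{ghs}, so there is no proof in this paper to compare against line by line). But as written your text is an outline, not a proof: the step you yourself label ``the main obstacle'' is precisely the step where the hypothesis $y(n)<2^{-32}\log n$ is actually used, and it is missing. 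Concretely, you must exhibit, for an \emph{arbitrary} increasing $y\to\infty$ below $2^{-32}\log n$, arbitrarily large scales $X$ at which
$\Psi^{*}(X)$ (or $\sqrt{\Psi^{*}(X)}$, see below) exceeds $\tau(d)\,C_0^{\pi(y(2X))+1}$, even though the lower bound naturally involves $y$ at a smaller point (say $y(\sqrt{X})$) and the upper bound involves $y(2X)$; an adversarial $y$ can jump between these points, so one needs a pigeonhole/subsequence argument (e.g.\ that $y(n)=O(\log n)$ forces $y(X^{2})\le Cy(X)$ along a subsequence) plus an explicit comparison of constants, and it is exactly there that $2^{-32}$ must beat a fixed power of the unit-equation constant $C_0$. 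Your heuristic explanation of the role of $2^{-32}$ is also off: in the regime $y\asymp c\log n$ the extra $\log\log\log X$ factor in $\log\Psi(X,c\log X)$ wins for \emph{every} fixed $c>0$, so the specific constant is not needed there; it is needed for the intermediate growth rates and for absorbing the losses below.

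Two further points need repair. First, the case in which $\B$ and $\C$ are both infinite is not handled by your closing remark that each summand is essentially a subset of $\F_y$; what the argument needs is a lower bound on the counting function of the summand you use as $\C$, and in the both-infinite case the only available input is $B(X)C(X)\ge A'(X)\ge \Psi^{*}(X)-O(1)$, giving $\max\bigl(B(X),C(X)\bigr)\ge\bigl(\Psi^{*}(X)-O(1)\bigr)^{1/2}$ along a subsequence of scales; this square-root loss (together with the fixed pair $b_1,b_2$ or $c_1,c_2$ chosen in advance) must be carried through the final comparison and is part of why a small explicit constant such as $2^{-32}$ is imposed. Second, in the finite-$\B$ case you should state the comparison at the chosen scale with all quantities evaluated consistently (the pairs are counted up to $X+b_2$, the smoothness bound is $y(X+b_2)$, and $d/g$ must be $y$-smooth at that scale), which is routine but not automatic. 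Until the scale-selection argument and the explicit constant bookkeeping are written out, the proposal does not yet establish Theorem A.
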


(If $y(n)$ is increasing then the set $\F_y$ is m-reducible since $\F_y=\F_y\cdot\F_y$, and we also have $\F_y\sim\F_y\cdot\{1,2\}$, thus if we want to prove an {\sl m-primitivity} theorem involving $\F_y$ then we have to switch from $\F_y$ to the shifted set
\begin{equation}
\label{eq16}
\g_y:=\F_y+\{1\}.
\end{equation}
See also \cite{el}.)

\begin{theoremb}
\label{thmb}
If $y(n)$ is defined as in Theorem \ref{thm1}, then the set $\g_y$ is totally {\sl m-primitive}.
\end{theoremb}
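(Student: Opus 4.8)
The plan is to follow the pattern of the proof of Theorem A: from a hypothetical m-decomposition I extract two $y$-smooth numbers tied by a fixed linear relation, and then use effective bounds for $S$-unit equations to conclude that these numbers must be bounded. So suppose, contrary to the assertion, that some $\A'\subset\N$ with $\A'\sim\g_y$ admits an m-decomposition $\A'=\B\cdot\C$ with $|\B|\ge 2$, $|\C|\ge 2$. Since $\A'$ is asymptotically equal to the infinite set $\g_y$, it is infinite, so one of $\B,\C$ is infinite, and by the symmetry of \eqref{eq13} we may assume $\C$ is. Fix $b_1<b_2$ in $\B$, put $d=b_2-b_1\ge 1$, and choose $K$ with $\A'\cap[K,+\infty)=\g_y\cap[K,+\infty)$. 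For every sufficiently large $c\in\C$ both $b_1c$ and $b_2c$ lie in $\A'\cap[K,+\infty)=(\F_y+\{1\})\cap[K,+\infty)$, hence
\[
U:=b_1c-1\in\F_y,\qquad V:=b_2c-1\in\F_y ,
\]
so $U<V$ are both $y$-smooth.

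Next, $b_1V-b_2U=b_1(b_2c-1)-b_2(b_1c-1)=b_2-b_1=d$, and dividing by $d$ yields the equation
\[
\frac{b_1V}{d}-\frac{b_2U}{d}=1 .
\]
Since $y$ is increasing we have $p^{+}(U),p^{+}(V)\le y(V)$, and by \eqref{eq15}, $y(V)<2^{-32}\log V$ for $V$ large; as the prime factors of $b_1b_2d$ are bounded, it follows that for all large $c$ every prime dividing $b_1b_2dUV$ is $<z:=2^{-32}\log V$. Taking $S$ to be the set of all primes $\le z$, the two terms on the left above are $S$-units of $\Q$, so this is an $S$-unit equation.

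At this point I would invoke the known explicit height bounds for solutions of $S$-unit equations over $\Q$ (Gy\H{o}ry, Bugeaud, Gy\H{o}ry and Yu): I need such a bound in the form $e^{\kappa z}$ with $\kappa$ an absolute constant, which requires checking that the dependence on $S$ — which enters through $\prod_{p\in S}p=e^{\theta(z)}$ (with $\theta$ Chebyshev's function) and through further factors that, since $\pi(z)\log\pi(z)\ll z$, are of size $e^{O(z)}$ — is altogether at most exponential in $z$. Granting this, the height of $b_1V/d$ is $\le e^{\kappa z}=V^{\kappa 2^{-32}}$, while trivially it is $\ge V/d$. The constant $2^{-32}$ in \eqref{eq15} is chosen precisely so that $\kappa 2^{-32}<1$, and then $V/d\le V^{\kappa 2^{-32}}$ forces $V=b_2c-1$, and hence $c$, to be bounded — contradicting that $\C$ is infinite. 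This completes the proof.

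The reduction to an $S$-unit equation is routine; the real obstacle is the last step, namely verifying that the effective $S$-unit bound, which genuinely grows with the primes of $S$, still stays below $V$. This works only because $y(n)<2^{-32}\log n$ keeps the admissible primes so small that $\prod_{p\le y(V)}p$, and with it the whole bound, is a fixed power $V^{\delta}$ with $\delta<1$; the role of the explicit constant $2^{-32}$ is exactly to dominate the numerical constant coming from the cited $S$-unit bound. This is the same mechanism as in the proof of Theorem A, the multiplicative setting only changing how the linear relation between $U$ and $V$ is produced.
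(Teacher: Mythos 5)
Your reduction itself is fine: from $\A'=\B\cdot\C$ with $\C$ infinite and $b_1<b_2$ fixed in $\B$ one does get, for infinitely many $c$, two $y$-smooth numbers $U=b_1c-1<V=b_2c-1$ with $b_1V-b_2U=b_2-b_1=d$. The fatal step is the one you yourself flag as "the real obstacle": the claim that known effective $S$-unit bounds give $H(b_1V/d)\le e^{\kappa z}$ with an absolute $\kappa$, where $S$ is the set of all primes $\le z=2^{-32}\log V$. The bounds of Gy\H{o}ry, Bugeaud--Gy\H{o}ry and Gy\H{o}ry--Yu bound the \emph{logarithmic} height of solutions by a quantity which is itself (at least) exponential in $z$: over $\Q$ the bound has the shape $c\,s^{2s+O(1)}\frac{P}{\log P}\prod_{p\in S}\log p$ with $s=|S|=\pi(z)$, and since $s\log s\sim z$ this is $e^{(1+o(1))z}$, not $O(z)$. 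Plugging in $z=2^{-32}\log V$ you only get $\log V\le V^{(1+o(1))2^{-32}}$, which is vacuously true and yields no contradiction. What you actually need --- that any solution of $x-y=d$ in $z$-smooth numbers satisfies $\log x\le \kappa z$, i.e.\ a height bound that is a fixed power of $\prod_{p\le z}p$ --- is a weak ABC-type statement: it would imply that the greatest prime factor of $(b_1c-1)(b_2c-1)$ exceeds a positive constant times $\log c$, whereas the best unconditional lower bounds for such quantities (obtained precisely by the Baker--Yu method you invoke) are only of order $\log\log c$. So with current technology your scheme could at best treat $y(n)\ll\log\log n$, nowhere near $y(n)<2^{-32}\log n$.

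For the same reason your closing remark that this is "the same mechanism as in the proof of Theorem A" cannot be right: a proof of Theorem A along these lines would likewise give $P(n(n+d))\gg\log n$, which is open. The proofs of Theorems A and B in part I \cite{ghs} (not reproduced in this paper) have to exploit much more than one linear relation between two smooth numbers coming from two fixed elements of $\B$; as the introduction indicates by crediting the partial result of Elsholtz and Harper \cite{eh}, they rest on the full covering property of the decomposition (essentially every large element of $\F_y$, resp.\ $\g_y$, arises from it) combined with counting/density estimates for smooth numbers, which is where a constant like $2^{-32}$ naturally comes from. Note also the instructive contrast with Theorem \ref{thm2} of the present paper: there the $S$-unit machinery (Theorem C) does apply, but only because $\Gamma$ is a \emph{fixed} finite set, whereas in your argument the set of admissible primes grows with $V$, and it is exactly this uniformity in $S$ that the known effective results do not provide.
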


Here our goal is to prove further related results. First we will prove a theorem in the direction opposite to the one in Theorem A. Indeed, we will show that if $y(n)$ grows faster than $n/2$, then $\F_y$ is {\it not} totally a-primitive.

\begin{theorem}
\label{thm1}
Let $y(n)$ be any monotone increasing function on $\N$ with
$$
\frac{n}{2}<y(n)<n\ \ \ \text{for all}\ n\in\N.
$$
Then $\F_y$ is {\rm not} totally a-primitive. In particular, in this case the set
$$
\F_y\cap [9,+\infty)
$$
is a-reducible, namely, we have
$$
\F_y\cap [9,+\infty) = \A+\B
$$
with
$$
\A=\{n\in\N:\text{none of}\ n,n+1,n+3,n+5\ \text{is prime}\},\ \B=\{0,1,3,5\}.
$$
\end{theorem}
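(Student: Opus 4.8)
\medskip

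\noindent\textbf{Proof plan.} The strategy is to first convert the hypothesis on $y$ into an explicit description of $\F_y$ far out, and then to verify separately the two inclusions $\A+\B\subseteq\F_y\cap[9,+\infty)$ and $\F_y\cap[9,+\infty)\subseteq\A+\B$. For the description: under the assumption $n/2<y(n)<n$, an integer $n\geq 2$ lies in $\F_y$ precisely when it is composite — if $n$ is prime then $p^+(n)=n>y(n)$, so $n\notin\F_y$, while if $n=ab$ with $2\leq a\leq b<n$ then $p^+(n)\leq b\leq n/a\leq n/2<y(n)$, so $n\in\F_y$. Hence $\F_y\cap[9,+\infty)$ is exactly the set of composite integers $\geq 9$; in particular it is asymptotically equal to $\F_y$, so producing one a-decomposition of it suffices.

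For the inclusion $\A+\B\subseteq\F_y\cap[9,+\infty)$ I would first record the finite check that for each $a\in\{1,\dots,8\}$ at least one of $a,a+1,a+3,a+5$ is prime, while $9,10,12,14$ are all composite; hence $\min\A=9$. Then, for $a\in\A$ and $b\in\B=\{0,1,3,5\}$, the number $a+b$ is one of $a,a+1,a+3,a+5$, so it is $\geq 9$ and, by the definition of $\A$, not prime, hence composite; by the first paragraph it belongs to $\F_y\cap[9,+\infty)$.

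The reverse inclusion is the heart of the matter, and the argument runs as follows. Let $m\geq 9$ be composite. If $m$ is odd, then $m+1,m+3,m+5$ are even and $\geq 10$, hence composite, so all four of $m,m+1,m+3,m+5$ are composite, i.e.\ $m\in\A$, and $m=m+0\in\A+\B$. If $m$ is even (so $m\geq 10$), look at the three consecutive odd numbers $m-1,m-3,m-5$, all $\geq 5$; they represent the three residue classes modulo $3$, so exactly one of them, say $a=m-b$ with $b\in\{1,3,5\}$, is divisible by $3$, and being an odd multiple of $3$ which is $\geq 5$ it is in fact $\geq 9$, hence composite. Then $a$ is not prime, while the other three elements $a+1,a+3,a+5$ of the quadruple are even and $\geq 6$, hence composite; so $a\in\A$ and $m=a+b\in\A+\B$.

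Putting the two inclusions together yields $\F_y\cap[9,+\infty)=\A+\B$, where $|\B|=4$ and $\A$ is infinite (it contains every odd composite, as just shown), so this is a genuine a-decomposition; since $\F_y\cap[9,+\infty)\sim\F_y$, the set $\F_y$ is not totally a-primitive. I do not anticipate a real obstacle: the whole argument is elementary. The one point requiring a little care is the even case of the reverse inclusion — the mild trick being that picking, among $m-1,m-3,m-5$, the unique multiple of $3$ as the base point $a$ makes $a$ itself composite, while the remaining members $a+1,a+3,a+5$ of the quadruple are automatically even, hence composite. The finite check that $\min\A=9$ is also what pins down the threshold $[9,+\infty)$ in the statement, since $\min(\A+\B)=9$.
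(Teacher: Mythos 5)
Your proposal is correct and follows essentially the same route as the paper's proof: identify $\F_y$ with the composite numbers, note that every element of $\A+\B$ is composite, handle odd composites via $m\in\A$, and handle even composites by subtracting a suitable $b\in\{1,3,5\}$ to land on a non-prime base point. Your write-up just makes explicit the details the paper leaves implicit (the divisibility-by-$3$ choice among $m-1,m-3,m-5$ and the finite check that $\min\A=9$).
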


Next we will show that under a standard conjecture, the decomposition in Theorem \ref{thm1} is best possible in the sense that no such decomposition is possible with $2\leq |\B|\leq 3$. For this, we need to formulate the so-called prime $k$-tuple conjecture. A finite set $A$ of integers is called admissible, if for any prime $p$, no subset of $A$ forms a complete residue system modulo $p$.

\begin{conj}[The prime $k$-tuple conjecture]
\label{c1}
Let $\{a_1,\dots,a_k\}$ be an admissible set of integers. Then there exist infinitely many positive integers $n$ such that $n+a_1,\dots,n+a_k$ are all primes.
\end{conj}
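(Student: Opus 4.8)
The final statement is the prime $k$-tuple conjecture, which in this quantitative form is due to Hardy and Littlewood (the qualitative version going back to Dickson), and is one of the major open problems of analytic number theory; so what follows describes the natural line of attack and the point at which it currently breaks down, rather than a complete argument. The plan is to study, for $X\to\infty$, the counting function
\[
\pi_{a_1,\dots,a_k}(X)=\bigl|\{n\in\N:\ n\le X,\ n+a_1,\dots,n+a_k\ \text{all prime}\}\bigr|,
\]
and to show it is unbounded; conjecturally one even has the asymptotic $\pi_{a_1,\dots,a_k}(X)\sim\mathfrak S(a_1,\dots,a_k)\,X/(\log X)^k$, where the singular series is $\mathfrak S=\prod_p\bigl(1-\tfrac1p\bigr)^{-k}\bigl(1-\tfrac{\nu(p)}p\bigr)$ and $\nu(p)$ is the number of residue classes modulo $p$ covered by $\{a_1,\dots,a_k\}$. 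Admissibility is precisely the requirement $\nu(p)<p$ for every prime $p$, and this is exactly what makes each factor, and hence $\mathfrak S$, positive; thus the role of the admissibility hypothesis is to rule out the trivial local obstructions.

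First I would set up the Hardy--Littlewood circle method. Replacing the prime indicator by the von Mangoldt weight and forming the exponential sum $S(\alpha)=\sum_{n\le X}\Lambda(n)e(n\alpha)$, one writes the weighted count as $\int_0^1\prod_{j=1}^{k}S(\alpha)\,e(-a_j\alpha)\,d\alpha$ after the appropriate shift of variables. Splitting $[0,1]$ into major and minor arcs, the major arcs are handled by the Siegel--Walfisz theorem and yield the expected main term $\mathfrak S\,X$ for the weighted count (equivalently $\mathfrak S\,X/(\log X)^k$ for $\pi_{a_1,\dots,a_k}$). The entire difficulty is the minor arcs: for $k\ge 2$ there is no known estimate showing $\int_{\mathfrak m}\prod_{j}S(\alpha)e(-a_j\alpha)\,d\alpha=o(\mathfrak S\,X)$, since the available tools are only the $L^2$ bound $\int_0^1|S(\alpha)|^2\,d\alpha\ll X\log X$ together with the pointwise Vinogradov-type bound $|S(\alpha)|\ll X(\log X)^{-A}$ on $\mathfrak m$, and pairing $k$ copies of the latter against the former falls far short of the major-arc contribution.

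In the absence of a circle-method proof, the substantive content one can actually supply is: an upper bound of the conjectured order, $\pi_{a_1,\dots,a_k}(X)\ll\mathfrak S\,X/(\log X)^k$, via the Selberg or Brun sieve; and, from the work of Goldston--Pintz--Y{\i}ld{\i}r{\i}m and Maynard--Tao, the statement that for $k$ large enough (relative to a parameter $m$) some --- but not every --- admissible $k$-tuple has infinitely many translates for which $m$ of the $n+a_i$ are simultaneously prime, and in particular infinitely many $n$ with two prescribed coordinates $n+a_i,n+a_j$ both prime. The cases $k=1$ are classical or trivial. The genuine obstacle --- which I would flag not merely as hard but as currently beyond reach --- is the parity problem of sieve theory: no sieve-theoretic method can separate integers with an even number of prime factors from those with an odd number, and asking that all of $n+a_1,\dots,n+a_k$ have exactly one prime factor collides head-on with this barrier. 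Hence the statement appears here as a \emph{conjecture}: the most one would actually prove is the major-arc asymptotic (conditional on a minor-arc bound one does not have) and the unconditional sieve upper bound, and Conjecture~\ref{c1} itself is then used as a hypothesis in the subsequent application to the optimality of the decomposition in Theorem~\ref{thm1}.
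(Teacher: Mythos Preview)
There is nothing to compare here: the statement is labelled \textbf{Conjecture} in the paper, and the paper offers no proof of it whatsoever --- it is simply stated, attributed implicitly to Dickson and Hardy--Littlewood, and then \emph{assumed} as a hypothesis in Theorem~\ref{prop1}. Your write-up correctly identifies this, gives an accurate sketch of the Hardy--Littlewood heuristic (the singular series $\mathfrak S$ and why admissibility is exactly the condition $\nu(p)<p$ making each local factor positive), correctly locates the obstruction in the minor-arc estimate and the parity problem, and notes the Maynard--Tao partial results (which the paper also cites in its Remark). So your treatment is consistent with the paper's: neither proves the conjecture, and both use it as an input.

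One small quibble: your circle-method integral $\int_0^1\prod_{j=1}^{k}S(\alpha)\,e(-a_j\alpha)\,d\alpha$ is not quite the right expression --- the product $\prod_j S(\alpha)$ is just $S(\alpha)^k$, and what you actually want is a single integral of the form $\int_0^1 S(\alpha)^k e\bigl(-(a_1+\dots+a_k)\alpha\bigr)\,d\alpha$ only after a more careful setup, or better, to work directly with the sum $\sum_{n\le X}\prod_{j}\Lambda(n+a_j)$ and detect primality via a sieve rather than the circle method. This is cosmetic and does not affect your main point, which is that the conjecture is open and functions in the paper purely as a hypothesis.
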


\noindent
{\bf Remark.} By a recent, deep result of Maynard \cite{m} we know that for each $k$, the above conjecture holds for a positive proportion of admissible $k$-tuples. We also mention that if the prime $k$-tuple conjecture is true, then there exist infinitely many $n$ such that $n+a_1,\dots,n+a_k$ are {\sl consecutive} primes (see e.g. the proof of Theorem 2.4 of \cite{hs}).

\begin{theorem}
\label{prop1}
Define $y(n)$ as in Theorem \ref{thm1} and suppose that the prime $k$-tuple conjecture is true for $k=2,3$. Then for any $\C\subset\N$ with $\C\sim \F_y$ there is no decomposition of the form
$$
\C=\A+\B
$$
with
$$
2\leq |\B|\leq 3.
$$
\end{theorem}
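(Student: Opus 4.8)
The plan is to argue by contradiction. Suppose $\C\sim\F_y$ and $\C=\A+\B$ with $2\le|\B|\le 3$; after translating $\A,\B$ we may take $0\in\B$. Since $n/2<y(n)<n$, an integer $n\ge 2$ lies in $\F_y$ exactly when it is composite (then $p^+(n)\le n/2<y(n)$, whereas a prime is never $y$-smooth), so there is a $K$ with $\C\cap[K,\infty)=\{n\ge K:\ n\ \text{is composite}\}$. If $m$ is a sufficiently large composite then $m\in\C=\A+\B$, say $m=a+\beta_0$ with $a\in\A$ and $\beta_0\in\B$; as $\B$ is finite, $a=m-\beta_0$ is again large, and hence for \emph{every} $\beta\in\B$ the number $a+\beta=m+(\beta-\beta_0)$ lies in $\C\cap[K,\infty)$ and is composite. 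Thus, to reach a contradiction it is enough to exhibit \emph{arbitrarily large composite} integers $m$ such that for \emph{every} $\beta\in\B$ at least one of the numbers $m+(\beta'-\beta)$, $\beta'\in\B$, is prime -- i.e.\ every translated window $m+(\B-\beta)$ contains a prime.

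I would construct such $m$ using Conjecture \ref{c1}. For each $\beta\in\B$ pick a non-zero element of $\B-\beta$ that is to be prime at $m$ (the offset $0$ corresponds to $m$ itself, which must stay composite); these picks, together with one auxiliary offset if needed, form a finite set $D$ of non-zero integers, and the goal becomes: produce infinitely many composite $m$ with $m+d$ prime for all $d\in D$. The basic device is to arrange that $D$ (after translating its least entry to $0$) is admissible, so that Conjecture \ref{c1} with $k=|D|\le 3$ gives infinitely many candidates $m$, while $D\cup\{0\}$ -- which also records the position of $m$ -- is \emph{in}admissible, forcing $m$ composite. For $|\B|=2$, $\B=\{0,t\}$, there is no freedom: $D=\{-t,t\}$, so one needs composite $m$ with $m\pm t$ prime; since $\{0,2t\}$ is admissible and $\{0,t,2t\}$ is inadmissible when $6\nmid t$, Conjecture \ref{c1} yields a prime pair $n,n+2t$ with composite midpoint $m=n+t$, settling all such $t$. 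A variant -- apply Conjecture \ref{c1} to $\{0,2t,4t\}$ (admissible once $3\mid t$) to get $n,n+2t,n+4t$ prime; as the five-term progression $\{0,t,2t,3t,4t\}$ is inadmissible whenever $30\nmid t$, at least one of $n+t,n+3t$ is composite, and it has both its $\pm t$-neighbours among $n,n+2t,n+4t$ -- extends this to all $t$ with $30\nmid t$. For $|\B|=3$, $\B=\{0,s,t\}$, the three windows supply non-zero offsets from $\{s,t\}$, $\{-s,t-s\}$ and $\{-t,s-t\}$, and over the $2^3$ prescriptions one runs the same admissible/inadmissible bookkeeping.

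The step I expect to be the main obstacle is the residual ``congruence'' cases -- $30\mid t$ for $|\B|=2$, and the degenerate choices of $s,t$ for $|\B|=3$ -- where the tuple one wants inadmissible is actually admissible, so compositeness of $m$ is not available for free. There I would force it by hand: fix a small prime $\ell$ with $\ell\nmid t$ and demand $\ell\mid m$. This reduces matters to finding infinitely many $n$ for which $\ell n-t$ and $\ell n+t$ are simultaneously prime -- the prime $k$-tuple conjecture for the pair of linear forms $\ell X\mp t$ (equivalently, primes $p\equiv -t\pmod\ell$ with $p+2t$ prime), which carries no local obstruction precisely because $\ell\nmid t$ -- and then $m=\ell n$ is a composite integer with $m\pm t$ prime. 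Together with the above this completes $|\B|=2$ for all $t$ and, by the same manoeuvre, clears the remaining $|\B|=3$ configurations. The genuinely delicate part is the finite but fussy verification that the small tuples entering the argument are admissible exactly when they must be, and that for each shape of $\B$ one of the two composite-forcing devices indeed applies.
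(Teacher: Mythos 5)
Your reduction (eventually $\C$ is exactly the set of composites, normalize $0\in\B$, and then seek arbitrarily large composite $m$ such that every translated window $m+(\B-\beta)$ contains a prime) is exactly the paper's framework, and your admissible/inadmissible bookkeeping for $|\B|=2$ is correct as far as it goes: the pair $\{0,2t\}$ with the inadmissibility of $\{0,t,2t\}$ settles $6\nmid t$, and the triple $\{0,2t,4t\}$ with the inadmissibility of $\{0,t,2t,3t,4t\}$ settles $5\nmid t$. The genuine gap is the residual case you yourself flag (for $|\B|=2$, $30\mid t$; similarly the degenerate $|\B|=3$ configurations): there you invoke ``the prime $k$-tuple conjecture for the pair of linear forms $\ell X\mp t$'', i.e.\ infinitely many primes $p\equiv -t\pmod{\ell}$ with $p+2t$ prime. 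That is \emph{not} Conjecture \ref{c1}: the conjecture assumed in the theorem concerns only translates $n+a_1,\dots,n+a_k$ and gives no control whatsoever over the residue class of $n$, so your device is an appeal to a strictly stronger hypothesis (a Dickson/Hypothesis-H type statement for non-monic linear forms) that the theorem does not grant. Within your own framework the obstruction is real: forcing the pivot composite by a small prime $p$ requires $p\nmid t$ and $p$ at most the length of the prime progression you can request, which is $\le 5$ when only $2$- and $3$-tuples are available, so $30\mid t$ cannot be reached this way.

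The paper avoids this case split entirely by a different device: it applies Conjecture \ref{c1} to an admissible tuple of offsets that straddles $0$ (the pair $\{-b_2,b_2\}$ for $|\B|=2$, and one of six explicitly constructed admissible triples, chosen according to the parities and residues mod $3$ of $b_2,b_3$, for $|\B|=3$), and then uses the Remark after Conjecture \ref{c1} that the primes produced may be taken \emph{consecutive}; consecutiveness forces the central number $n$ to be composite for every $\B$, with no congruence conditions on $n$ needed. If you do not want to use that Remark, you must either prove its content or restrict to the non-degenerate residues, so as written your argument does not cover all $\B$ under the stated hypothesis. Separately, your $|\B|=3$ case is only gestured at (``run the same bookkeeping over the $2^3$ prescriptions''): the actual task is to exhibit, for each shape of $\{0,b_2,b_3\}$, a \emph{single} admissible set of at most three offsets whose simultaneous primality blocks all three candidates $m,m-b_2,m-b_3\in\A$ while $m$ stays composite — this is precisely the finite but nontrivial case analysis the paper carries out, and it is not automatic.
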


We propose the following problem, which is a shifted, multiplicative analogue of the question studied in Theorems \ref{thm1} and \ref{prop1}.

\noindent{\bf Problem.} With the same $y=y(n)$ as in Theorem \ref{thm1}, write
$$
\g_y=\F_y+\{1\}=\{m+1:m\in\F_y\}.
$$
Is the set $\g_y$ totally m-primitive?

Towards the above problem, we prove that no appropriate decomposition is possible with $|\B|<+\infty$.

\begin{theorem}
\label{prop2}
Let $y(n)$ be as in Theorem \ref{thm1}. Then for any $\C\subset\N$ with $\C\sim \g_y$ there is no decomposition of the form
$$
\C=\A\cdot\B
$$
with
$$
|\B|<+\infty.
$$
\end{theorem}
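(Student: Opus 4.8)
The plan is to argue by contradiction. Suppose $\C\subset\N$ with $\C\sim\g_y$ admits a multiplicative decomposition $\C=\A\cdot\B$ with $\B=\{b_1,\dots,b_m\}$ finite, where we may assume $1\le b_1<b_2<\dots<b_m$ and (replacing $\A$ by $b_1\A$ if necessary, or by first dividing through) that $b_1=1$; note $m\ge 2$. Since $\C\sim\g_y$ and by the hypothesis $n/2<y(n)<n$, for every prime $p$ the number $p+1$ lies in $\g_y$ (because $p^+(p)=p<y(p)$ fails — wait, we need $p^+(p)=p\le y(p)$, which is false since $y(p)<p$; rather the point is that $m\in\F_y$ exactly when $p^+(m)\le y(m)$, and since $y(m)>m/2$, every $m$ that is either a prime power $q^a$ with $q\le m/2$, or more simply every $m$ with $p^+(m)\le m/2$, is in $\F_y$; in particular every \emph{even} number $2k$ has $p^+(2k)\le k<y(2k)$ only if its largest prime factor is $\le k$, which holds unless $2k=2q$ with $q>k$ a prime — but $2q$ with $q$ prime has $p^+=q=k$, and $y(2q)>q$, so in fact $2q\in\F_y$ too). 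The key structural fact I would isolate first is: $\F_y$ contains all sufficiently large integers that are \emph{not} prime, and omits exactly the large primes. Hence $\g_y\cap[K,\infty)=\{n\ge K:\ n-1\text{ is not prime}\}$ for suitable $K$, and after discarding finitely many elements the same holds for $\C$.

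With this description in hand, the strategy is to exploit the multiplicativity to force a contradiction with the distribution of primes. If $\C=\A\cdot\B$ and $a\in\A$ is large, then $ab_i\in\C$ for all $i$, so none of $ab_1-1,\dots,ab_m-1$ is prime; in particular $ab_m-1$ is composite for every large $a\in\A$. On the other hand, $\A$ must be infinite (since $\C$ is), and for any fixed large $c\in\C$ the factorization $c=ab_i$ shows $a=c/b_i\in\A$ for whichever $b_i\mid c$; one checks that $\A$ in fact contains a positive proportion of, or at least infinitely many, integers in arithmetic-progression-like position. The cleanest route: pick a large prime $q$ and consider $c=q+1\in\C$ — wait, $q+1$ need not be a product of the right shape. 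Better: I would pick a large element of $\A$ directly and show that $\A$ itself must be $\sim\g_{y'}$-like, or more robustly, I would show that for \emph{every} large $a\in\A$ and every $b_i$, $ab_i-1$ is composite, then produce a single large $a\in\A$ with $ab_m-1$ prime, contradiction. To produce such $a$: since $\A=\C/b_1=\C$ (as $b_1=1$), $\A$ agrees with $\g_y$ beyond some point, so $\A$ contains $q+1$ for every large prime $q$ with $q+1$ having $p^+(q+1)\le y(q+1)$ — and for \emph{every} large prime $q$, $q+1$ is even and composite, hence $p^+(q+1)\le (q+1)/2<y(q+1)$, so $q+1\in\F_y$, i.e. $q+2\in\g_y\subset$ (up to finitely many) $\A$. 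Thus $\A$ contains $q+2$ for every large prime $q$. Now take $b_m\ge 2$: we need a large prime $q$ such that $(q+2)b_m-1$ is prime. The pair $\{0,\,2b_m-? \}$ — concretely, we want $q$ prime and $b_m q+(2b_m-1)$ prime simultaneously; this is a linear-forms-in-one-variable statement, an instance covered by the prime $k$-tuple conjecture for $k=2$... but the theorem does \emph{not} assume that conjecture.

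So the real mechanism must be unconditional, and I expect it goes through a \emph{sieve lower bound} or a pigeonhole on residues rather than through prime $k$-tuples. The unconditional fact available is: there are infinitely many primes, and more usefully, by the prime number theorem in arithmetic progressions (or even Dirichlet), for any modulus the primes are not confined to finitely many residue classes. The argument I would therefore pursue: if $\B=\{1=b_1,\dots,b_m\}$ with $m\ge2$ and $b_m\ge 2$, consider a large composite $c\in\C$ that is divisible by $b_m$; writing $c=b_m a$ with $a\in\A$, and also $c=b_1 a'=a'$ with $a'\in\A$, we get $a'=b_m a\in\A$, and inductively $b_m^k a\in\A\subset\C$ for all $k$ (up to the boundedness of $\C$'s description — but $\C$ has no upper bound). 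Then $b_m^k a-1$ must be composite for \emph{all} $k\ge0$ and all large $a\in\A$. Now I choose $a$ cleverly: take $a=t+1$ where $t$ is a large prime (legitimate, as shown $t+1\in\F_y$ so... actually we need $a\in\A$; since $\A\supset\g_y\cap[K,\infty)$ and $t+2\in\g_y$ for $t$ prime, take $a=t+2$). The sequence $b_m^k(t+2)-1$, $k=0,1,2,\dots$: for this to be composite for all $k$ is an extremely strong constraint; a covering-congruence / Zsygmondy-type argument, or simply the fact that a non-trivial geometric-progression-plus-constant $\{b_m^k\cdot N-1\}$ cannot be entirely composite (one can invoke that $b_m^k N-1$ has primitive prime divisors for large $k$, or run a direct sieve: the density of $k$ for which $b_m^k N-1$ is composite due to small primes is $<1$), yields a contradiction. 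The \textbf{main obstacle} — and the step I would spend the most care on — is making this last claim rigorous \emph{without} assuming a prime $k$-tuple-type hypothesis: one must show that for suitable choices of $a\in\A$ the set $\{b_m^k a-1:k\ge0\}$ (or some comparably flexible family forced to lie in $\C$) provably contains a prime. I anticipate this is handled either by a clever choice making $b_m^k a - 1$ divisible by a fixed small prime for $k$ in one residue class and forcing primality in another via a counting/sieve argument, or by using that $\A$ is so large (it omits only primes) that it must contain two elements $a, a'$ with $a'/a$ close to $1$ yet $a'\cdot(\text{something})-1$ prime — but the covering-congruence route on the geometric progression strikes me as the intended and cleanest one, and I would write the proof around that.
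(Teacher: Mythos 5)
Your sketch correctly identifies the structure of $\C$ (beyond some point it is exactly the set of $n+1$ with $n$ composite), but the argument has genuine gaps exactly where it needs information about $\A$, and its closing mechanism is false. First, the normalization $b_1=1$ is not legitimate for a multiplicative decomposition: you cannot divide $\B$ through by $b_1$ unless $b_1$ divides every $b_i$. (The case $1\notin\B$ is in fact the easy one -- a large composite multiple $n$ of $b_1\cdots b_m$ has $n+1$ coprime to every $b_i$, hence $n+1\notin\A\cdot\B$ -- but it must be treated separately, not assumed away.) Second, from $\C=\A\cdot\B$ with $1\in\B$ you only get $\A\subseteq\C$, not $\A=\C$; so the step ``$\A=\C/b_1=\C$, hence $\A$ contains $q+2$ for every large prime $q$'' is unjustified, as is writing a large $c\in\C$ with $b_m\mid c$ as $c=b_m a$, $a\in\A$ (the decomposition of $c$ may use a different $b_i$), and the induction giving $b_m^k a\in\A$. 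The whole difficulty of the theorem is that one knows essentially nothing about $\A$. Third, the final claim -- that $\{b_m^k N-1:k\ge 0\}$ cannot be entirely composite -- is not merely unproved but false: Riesel numbers $N$ have $N\cdot 2^k-1$ composite for all $k\ge 1$, precisely via covering congruences, and primitive prime divisors never give primality. So even granting the elements of $\A$ you want, this route does not close.

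The paper's unconditional argument needs no knowledge of $\A$ beyond $\A\cdot\B=\C$ and turns the problem into primality of a single linear form in one variable, where Dirichlet's theorem suffices. Assuming $1\in\B$, one chooses by the Chinese Remainder Theorem a residue class of composite $n$ such that (i) $b_i\nmid n+1$ for every $i\ge 2$, and (ii) as $n$ runs through this class, $b_2(n+1)-1$ runs through an arithmetic progression whose first term is coprime to its modulus. Dirichlet then provides a large such $n$ with $b_2(n+1)-1$ prime; this forces $n+1\notin\A$ (otherwise $b_2(n+1)\in\C$ and $b_2(n+1)-1$ would be composite), and together with (i) it gives $n+1\notin\A\cdot\B=\C$, contradicting the fact that $n$ is a large composite. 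Arranging for exactly one prime value of one linear form -- rather than simultaneous primality, as your attempts with $q$ and $b_mq+(2b_m-1)$ or with the geometric progression require -- is the idea your proposal is missing.
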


\vskip.2cm

Let
$$
\Gamma:=\{n_1,\dots,n_s\}
$$
be a set of pairwise coprime positive integers $>1$, and let $\{\Gamma\}$ be the multiplicative semigroup generated by $\Gamma$, with $1\in\{\Gamma\}$. If in particular, $n_1,\dots,n_s$ are distinct primes, then we use the notation $\Gamma=S$, and $\{\Gamma\}=\{S\}$ is just the set of positive integers composed of the primes from $S$.

The next theorem shows that if $\Gamma$ is finite, then the sets of $k$-term and at most $k$-term sums of pairwise coprime elements of $\{\Gamma\}$ are totally m-primitive. For the precise formulation of the statement, write $H_1:=\{\Gamma\}$, and for $k\geq 2$ set
$$
H_k:=\{u_1+\dots+u_k:\ u_i\in\{\Gamma\},\ \gcd(u_i,u_j)=1\ \text{for}\ 1\leq i<j\leq k\}
$$
and
$$
H_{\leq k}:=\bigcup\limits_{\ell=1}^k H_\ell.
$$

\begin{theorem}
\label{thm2}
Let $k\geq 2$. Then both $H_k$ and $H_{\leq k}$ are totally m-primitive, apart from the only exception exception of the case $\Gamma=\{2\}$ and $k=3$, when we have
$$
H_{\leq 3}=\{1,2\}\cdot\{2^\beta,2^{\beta}+1:\beta\geq 0\}.
$$
\end{theorem}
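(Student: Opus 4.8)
The plan is to prove the contrapositive: assuming $\A'\subset\N$ with $\A'\sim H_k$ (the case $\A'\sim H_{\leq k}$ running in parallel) and $\A'=\B\cdot\C$ with $|\B|,|\C|\geq2$, I will derive a contradiction unless we are in the stated exceptional case. Since $\A'$ is infinite, one of $\B,\C$ is infinite, and as $\B\cdot\C=\C\cdot\B$ I may assume $|\C|=\infty$. Fix $b_1<b_2$ in $\B$. For every $c\in\C$ with $b_1c$ large we have $b_1c,b_2c\in H_k$, so $b_1c=u_1+\dots+u_k$ and $b_2c=v_1+\dots+v_k$ with the $u_i$ and the $v_j$ pairwise coprime elements of $\{\Gamma\}$ (for $H_{\leq k}$, with some of them $0$); eliminating $c$ gives
$$b_2u_1+\dots+b_2u_k \;=\; b_1v_1+\dots+b_1v_k.$$
Here every term is the product of a fixed integer ($b_1$ or $b_2$) with an element of the semigroup $\{\Gamma\}$, hence is composed of the finitely many primes dividing $b_1b_2n_1\cdots n_s$; so this is an $S$-unit equation in at most $2k$ terms.

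Next I would apply the finiteness theorem for $S$-unit equations (a linear equation in $S$-units has only finitely many solutions with no vanishing proper subsum). As $c$ ranges over the infinite set $\C$, infinitely many solutions arise, so infinitely many must be degenerate; pigeonholing over the finitely many degeneracy patterns and iterating, I pass to an infinite $\C_0\subseteq\C$ along which the terms split, in one fixed pattern, into blocks, each a subsum-free (primitive) relation. A block with two or more $u$-terms (or two or more $v$-terms) is a non-degenerate $S$-unit relation, hence has finitely many projective solutions; but rescaling a solution by a factor $>1$ would make all the $u_i$ in that block share its prime factors, against their pairwise coprimality, so such a block has only boundedly many solutions. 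Therefore exactly one block, the one carrying the unbounded terms, is a single equation $b_2u_\ast=b_1v_\ast$; with $g=\gcd(b_1,b_2)$ and $b_i=gb_i'$ this forces $u_\ast=b_1't$, $v_\ast=b_2't$ with $t\in\{\Gamma\}$, and since two coprime elements of $\{\Gamma\}$ have gcd $1$ while $\{\Gamma\}$ is closed under gcd and under integer ratios, also $t,b_1',b_2'\in\{\Gamma\}$. Thus along $\C_0$,
$$b_1c \;=\; w+b_1't ,\qquad b_2c \;=\; w'+b_2't ,$$
with $w,w'$ fixed, $b_1w'=b_2w$, $t=t(c)\to\infty$, and $w,w'$ represented by fixed collections of pairwise coprime $\{\Gamma\}$-elements each coprime to $t$.

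The contradiction must then be extracted from the \emph{exactness} of the equation $\A'=\B\cdot\C$: the dilates $b\C$ ($b\in\B$) together cover $H_k$ (resp. $H_{\leq k}$) up to a finite set. The analysis above pins $\C$ (along $\C_0$) to the single family $c=(w+b_1't)/b_1$; feeding this back, together with $\B\supseteq\{b_1,b_2\}$ and (when $|\B|\geq3$) the strictly more restrictive relations coming from the further elements of $\B$, I would show $\B\cdot\C$ cannot contain every large element of $H_k$. Concretely, when $\Gamma$ has at least two members the set $H_k$ contains elements $m=n_1^{x}+n_2^{y}+(k-2)$ with $n_1^x,n_2^y$ both large; for such $m$, either no element of $\B$ divides $m$, or for every $b\in\B$ with $b\mid m$ the sibling $b'(m/b)$ with $b'\in\B\setminus\{b\}$ fails to lie in $H_k$ — again an $S$-unit computation, now in few terms all proportional to powers of $n_1,n_2$ and the $b_i$ — so $m\notin\B\cdot\C$, a contradiction. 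This settles every $\Gamma$ with $s\geq2$ (after separating off the degenerate cases in which the $H_k$-elements are forced into a fixed residue class). For $\Gamma=\{2\}$, two distinct powers of $2$ are never coprime, so at most one summand exceeds $1$ and $H_k=\{2^\beta+(k-1):\beta\geq0\}$, $H_{\leq k}=\{2^\beta+c:0\leq c\leq k-1,\ \beta\geq0\}$. Here every putative relation ``$rm\in H$ or $m/r\in H$'' (with $m$ large) reduces, after comparing $2$-adic valuations, to the identity $2^{\gamma}=r\,2^{\beta}+(r-1)(k-1)$ or an $H_{\leq k}$-analogue, which is impossible for large $\beta$ except for $H_{\leq k}$ with $k=3$ and $r=2$; and in that case $2\cdot2^\beta=2^{\beta+1}$, $2(2^\beta+1)=2^{\beta+1}+2$, $2^\beta+2=2(2^{\beta-1}+1)$ give exactly the stated decomposition $H_{\leq3}=\{1,2\}\cdot\{2^\beta,2^\beta+1:\beta\geq0\}$, so $H_{\leq3}$ is m-reducible, while $H_3=\{2^\beta+2:\beta\geq0\}$ and all other $H_k,H_{\leq k}$ are totally m-primitive.

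I expect the third paragraph to be the crux. The $S$-unit machinery cleanly reduces the problem to the rigid ``one unbounded $\{\Gamma\}$-summand plus a bounded coprime tail'' picture, but it does not exclude that picture by itself — for many $\Gamma$ (say $\Gamma=\{2,3\}$, $k=3$, ratio $3/2$) there genuinely are infinitely many pairs $(m,rm)$ with both in $H_k$. It is the covering requirement forced by $\A'=\B\cdot\C$ that actually kills the decomposition, and turning this requirement into a contradiction uniformly in $\Gamma$ — while checking that the sole leak is $\Gamma=\{2\}$, $k=3$ for $H_{\leq k}$ — is the delicate part; one also has to handle separately the sub-cases where $\B$ is infinite and where $H_k$ happens to lie in a fixed congruence class.
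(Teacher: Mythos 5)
Your overall strategy is the same as the paper's: eliminate $c$ between $b_1c,b_2c\in H_k$ to get a weighted $S$-unit equation, invoke the finiteness theorem for such equations together with the pairwise coprimality of the summands to conclude that only two-term blocks $b_2u_i=b_1v_j$ can carry unbounded summands, exploit the covering property of $\A'=\B\cdot\C$ on specially chosen elements of $H_k$, and treat $s=1$ separately to isolate the exception $\Gamma=\{2\}$, $k=3$ (your verification of that exceptional decomposition is correct).

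The decisive step, however, is missing, and you flag it yourself. Two concrete points. First, ``exactly one block \dots carries the unbounded terms'' is not justified: nothing prevents two summands $u_{i_1},u_{i_2}$ from being unbounded simultaneously, and the whole proof in fact turns on \emph{forcing} that to happen. Second, the contradiction for $s\geq 2$ is only asserted (``I would show\dots'', ``again an $S$-unit computation''), whereas it is exactly the point where the coprimality hypothesis in the definition of $H_k$ must be used. The paper closes this gap as follows: it first upgrades your one-unbounded-block analysis to the structure statement $\A=\{a_0t:t\in T\}$ with $T\subset\{\Gamma\}$, and then feeds in the special elements $r=n_1^{\alpha_1}+\dots+n_m^{\alpha_m}+1+\dots+1$ with $m=\min(s,k)$ and every $n_i^{\alpha_i}$ outside the finite exceptional set $\el$ of Corollary \ref{cork}. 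Writing $r=a_0t_1b$ and comparing with $a_0t_2b\in\R$ ($t_2\neq t_1$), the resulting equation must split into at least $m$ two-term blocks $t_2u_i=t_1v_i$; then $u_1v_i=v_1u_i$, and pairwise coprimality of the $u$'s and of the $v$'s gives $u_i=v_i$, hence $t_1=t_2$ --- a contradiction valid for every $\Gamma$ with $s\geq 2$ at once, with no residue-class case distinctions. This is precisely your ``sibling $b'(m/b)$'' computation, but it has to be carried out rather than invoked. Finally, your $s=1$ analysis covers only $\Gamma=\{2\}$: the case $\Gamma=\{n\}$ with $n\geq 3$ also survives to this stage and must be excluded (the paper's inequality $n^{\alpha}j\leq k-1$ with $j\in\{k-2,k-1\}$ does it), and for $H_{\leq k}$ one must also rule out ratios $r>2$ and non-minimal powers in $\A$, which your valuation sketch does not yet do.
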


\noindent{\bf Remark.} As we have
$$
\{\Gamma\}=\{\Gamma\}\cdot \{\Gamma\},
$$
the assumption $k\geq 2$ is clearly necessary. Further, the coprimality assumption in the definition of $H_k$ cannot be dropped. Indeed,
letting
$$
H_k^*:=\{u_1+\dots+u_k:\ u_i\in\{\Gamma\}\ \text{for}\ 1\leq i\leq k\}
$$
and
$$
H_{\leq k}^*:=\bigcup\limits_{\ell=1}^k H_\ell^*
$$
we clearly have
$$
H_k^*=\{\Gamma\}\cdot H_k^*\ \ \ \text{and}\ \ \ H_{\leq k}^*=\{\Gamma\}\cdot H_{\leq k}^*.
$$

\section{Proof of Theorem \ref{thm1}}

By the choice of $y(n)$ we see that $\F_y$ is the set of all composite integers. Put
$$
\C=\F_y\cap [9,+\infty).
$$
We show that by the definition of $\A$ and $\B$ as in the theorem, we have
$$
\C=\A+\B.
$$
To see this, first observe that by the assumptions on $\A$ and $\B$, all the elements of $\A+\B$ are composite. So we only need to check that all composite numbers $n$ with $n\geq 9$ belong to $\A+\B$. If $n$ is an odd composite number, then by $n\in\A$ we have
\begin{equation}
\label{eq00}
n\in(\A+\B).
\end{equation}
So assume that $n$ is an even composite number with $n\geq 10$. Then one of $n-1,n-3,n-5$ is {\sl not} a prime. As this number is clearly in $\A$, we have \eqref{eq00} again and our claim follows.
$\qed$

\section{Proof of Theorem \ref{prop1}}

Let $\C\subset\N$ with $\C\sim\F_y$. Then, as we noted in the proof of Theorem \ref{thm1}, with some positive integer $n_0$ we have
$$
\C\cap [n_0,+\infty)=\{n\in\N:n\geq n_0\ \text{and}\ n\  \text{is composite}\}.
$$
We handle the cases $k=2$ and $3$ separately.

Let first $k=2$, that is assume that contrary to the assertion of the theorem the set $\C$ can be represented as
\begin{equation}
\label{ossz}
\C=\A+\B
\end{equation}
with $|\B|=2$. Set $B=\{b_1,b_2\}$. Clearly, without loss of generality we may assume that $b_1<b_2$ and also that $b_1=0$. Indeed, the first assumption is trivial, and the second one can be made since \eqref{ossz} implies that
$$
\C=\A^*+\{0,b_2-b_1\}
$$
with
$$
\A^*=\A+\{b_1\}=\{a+b_1:a\in\A\}.
$$
As the set $\{-b_2,b_2\}$ is admissible, by our assumption on the validity of Conjecture \ref{c1} we get that there exist infinitely many integers $n$ such that $n-b_2$ and $n+b_2$ are both primes. In view of the Remark after Conjecture \ref{c1}, we may assume that these primes are consecutive, that is, in particular, $n$ is composite. Observe that then, assuming that $n\geq n_0+b_2$, we have $n-b_2\notin\A$ and $n\notin\A$. Indeed, otherwise by the primality of $n-b_2$ and $n+b_2$, respectively, we get a contradiction: in case of $n-b_2\in\A$ we have $n-b_2\in\C$, while $n+b_2\in\A$ implies that $n+b_2\in\C$. But then we get $n\notin\C$, which is a contradiction.

Let now $k=3$, that is assume that we have \eqref{ossz} with some $\B$ with $|\B|=3$. Write $\B=\{b_1,b_2,b_3\}$. As in the case $k=2$, we may assume that $0=b_1<b_2<b_3$. Now we construct an admissible triple related to $\B$. If $b_2$ and $b_3$ are of the same parity, then either
$$
t_1=\{-b_3,-b_2,b_3\}
$$
or
$$
t_2=\{-b_3,-b_2,b_2\}
$$
is admissible, according as $3\mid b_3$ or $3\nmid b_3$. Further, if $b_2$ is odd and $b_3$ is even, then either
$$
t_3=\{-b_3+b_2,b_3-b_2,b_2\}
$$
or
$$
t_4=\{-b_3+b_2,-b_2,b_2\}
$$
is admissible, according as $b_2\equiv b_3\pmod{3}$ or $b_2\not\equiv b_3\pmod{3}$. Finally, if $b_2$ is even and $b_3$ is odd, then either
$$
t_5=\{-b_3+b_2,b_3-b_2,b_3\}
$$
or
$$
t_6=\{-b_3,b_3-b_2,b_3\}
$$
is admissible, according as $b_2\equiv b_3\pmod{3}$ or $b_2\not\equiv b_3\pmod{3}$. Let $1\leq i\leq 6$ such that $t_i$ is admissible, and write $t_i=\{u_1,u_2,u_3\}$. According to Conjecture \ref{c1} (see also the Remark after it) we get that there exists an $n$ with $n\geq n_0+b_3$ such that $n$ is composite, but
$$
n+u_1,\ \ \ n+u_2,\ \ \ n+u_3
$$
are all primes $\geq n_0$. However, then a simple check shows that for any value of $i$, we have that none of $n-b_3,n-b_2,n$ is in $\A$, since otherwise $\C$ would contain a prime $\geq n_0$. However, then we get $n\notin\C$. This is a contradiction, and our claim follows.
$\qed$

\section{Proof of Theorem \ref{prop2}}

Let $\C\subset\N$ with $\C\sim\g_y$. Then with some positive integer $n_0$ we have
$$
\C\cap [n_0,+\infty)=\{n+1:n\geq n_0-1\ \text{and}\ n\  \text{is composite}\}.
$$
Assume to the contrary that we can write
\begin{equation}
\label{szor}
\C=\A\cdot\B
\end{equation}
with $|\B|<+\infty$. Put $B=\{b_1,\dots,b_\ell\}$ with $\ell\geq 2$ and $1\leq b_1<b_2<\dots<b_\ell$.

Assume first that $1\notin\B$ (that is, $b_1>1$). Let $n$ be an arbitrary (composite) multiple of the product $b_1\dots b_\ell$ such that $n\geq n_0$. Then we immediately see that $n+1$ is not divisible by any $b_i$ $(i=1,\dots,\ell)$, which shows that $n+1\notin\C$. However, this is a contradiction, and our claim follows in this case.

Suppose now that $1\in\B$ (that is, $b_1=1$). For each of $i=2,\dots,\ell$ choose a prime divisor $p_i$ of $b_i$, with the convention that $p_i=4$ if $b_i$ is a power of $2$, and let $P$ be the set of these primes. Observe that $P$ is non-empty. Take two distinct primes $q_1,q_2$ not belonging to $P$, and consider the following system of linear congruences:
\begin{align*}
x\equiv 0&\pmod{q_i}\ \ \ \text{for}\ i=1,2,\\
x\equiv 1&\pmod{p}\ \ \ \text{if}\ p\in P,\ p\mid b_2-1,\\
x\equiv 0&\pmod{p}\ \ \ \text{if}\ p\in P,\ p\nmid b_2-1.
\end{align*}
Let $x_0$ be an arbitrary positive solution to the above system. Put
$$
N:=q_1q_2\prod\limits_{p\in P} p
$$
and consider the arithmetic progression
\begin{equation}
\label{ap}
(b_2N)t+(b_2(x_0+1)-1)
\end{equation}
in $t\geq 0$. Observe that here we have $\gcd(b_2N,b_2(x_0+1)-1)=1$. Indeed, $\gcd(b_2,b_2(x_0+1)-1)=1$ trivially holds, and as $b_2(x_0+1)-1=b_2x_0+b_2-1$, the relation $\gcd(N,b_2(x_0+1)-1)=1$ follows from the definition of $x_0$. Thus by Dirichlet's theorem there exist infinitely many integers $t$ such that \eqref{ap} is a prime. Let $t$ be such an integer with $t>n_0$, and put
$$
n:=tN+x_0.
$$
We claim that $n$ is composite with $n>n_0$, but $n+1\notin\C$. This will clearly imply the statement. It is obvious that $n>n_0$, and as $q_1q_2\mid N$ and $q_1q_2\mid x_0$, we also have that $n$ is composite. Further, we have that $n+1\notin\A$. Indeed, otherwise we would also have $b_2(n+1)\in\C$, that is, $b_2(n+1)-1$ should be composite - however,
$$
b_2(n+1)-1=(b_2N)t+(b_2(x_0+1)-1)
$$
is a prime. (The importance of this fact is that we cannot have $n+1\in\C$ by the relation $n+1=(n+1)\cdot 1$ with $n+1\in\A$ and $1\in\B$.) Further, since $n+1\equiv 1,2\pmod{p}$ for $p\in P$ as $p_i\geq 3$ and $p_i\mid b_i$ we have $b_i\nmid n+1$ for $i=3,\dots,\ell$. We need to check the case $i=2$ separately. If $b_2>2$, then we have $p_2\geq 3$ and $p_2\mid b_2$, and we have $b_2\nmid n+1$ again. On the other hand, if $b_2=2$ then as $b_2-1=1$ and $p_2=4$, we have $4\mid n$, so $b_2\nmid n+1$ once again. So in any case, $b_i\nmid n+1$ $(i=2,\dots,\ell)$. Hence $n+1$ cannot be of the form $ab_i$ with $a\in\A$ and $i=1,\dots,\ell$. Thus our claim follows also in this case.
$\qed$

\section{Proof of Theorem \ref{thm2}}

The proof of Theorem \ref{thm2} is based on the following deep theorem. Recall that $\{\Gamma\}$ denotes the multiplicative semigroup generated by $\Gamma$. Consider the equation
\begin{equation}
\label{k2}
a_1x_1+\dots+a_mx_m=0\ \ \ \text{in}\ x_1,\dots,x_m\in \{\Gamma\},
\end{equation}
where $a_1,\dots,a_m$ are non-zero elements of $\Q$. If $m\geq 3$, a solution of \eqref{k2} is called {\it non-degenerate} if the left hand side of \eqref{k2} has no vanishing subsums. Two solutions $x_1,\dots,x_m$ and $x_1',\dots,x_m'$ are {\it proportional} if
$$
(x_1',\dots,x_m')=\lambda(x_1,\dots,x_m)
$$
with some $\lambda\in \{\Gamma\}\setminus\{1\}$.

\begin{theoremc}
\label{thmc}
Equation \eqref{k2} has only finitely many non-proportional, non-degenerate solutions.
\end{theoremc}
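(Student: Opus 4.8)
The plan is to recognise \eqref{k2} as a linear equation whose unknowns range over a multiplicative group of finite rank, and then to invoke the deep finiteness theorem for such equations --- the $S$-unit equation theorem deduced from the Schmidt Subspace Theorem by Evertse, Schlickewei and Schmidt, building on earlier work of Evertse and of van der Poorten and Schlickewei. To set this up, observe that since $n_1,\dots,n_s$ are pairwise coprime integers greater than $1$ they are multiplicatively independent, so the subgroup $G$ of $\Q^{*}$ they generate is free abelian of rank $s$ and $\{\Gamma\}\subset G$. Enlarging $G$ to the subgroup $G'$ of $\Q^{*}$ generated by $-1$, by $n_1,\dots,n_s$ and by $a_1,\dots,a_m$, I obtain a finitely generated, hence finite rank, group that contains every $x_i\in\{\Gamma\}$ as well as all the coefficients $a_i$.

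Next I would reduce to a unit equation. Every coordinate of a solution of \eqref{k2} is a positive integer, hence nonzero, so from a solution $(x_1,\dots,x_m)$ I may form $w_i:=-a_ix_i/(a_mx_m)\in G'$ for $i=1,\dots,m-1$, and these satisfy $w_1+\dots+w_{m-1}=1$. Since $\sum_{j\in J}w_j=-(a_mx_m)^{-1}\sum_{j\in J}a_jx_j$ (with $w_m:=-1$), a subsum of $a_1x_1+\dots+a_mx_m$ vanishes exactly when the corresponding subsum of $w_1+\dots+w_{m-1}-1$ vanishes; thus $(x_1,\dots,x_m)$ is non-degenerate precisely when $(w_1,\dots,w_{m-1})$ is a non-degenerate solution of $w_1+\dots+w_{m-1}=1$ in $G'$. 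By the quoted theorem this unit equation has only finitely many non-degenerate solutions. (For $m=2$ there is no degeneracy condition and the step is the trivial remark that $x_1/x_2$ is a fixed rational number.)

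It remains to pass back to \eqref{k2}. If two solutions $(x_1,\dots,x_m)$ and $(x_1',\dots,x_m')$ of \eqref{k2} give the same tuple $(w_1,\dots,w_{m-1})$, then $x_i/x_m=x_i'/x_m'$ for each $i$, so $(x_1',\dots,x_m')=\lambda(x_1,\dots,x_m)$ with $\lambda=x_m'/x_m\in G$; hence the fibre of the map $(x_i)\mapsto(w_i)$ over a given point is exactly $\{\lambda(x_1,\dots,x_m):\lambda\in G,\ \lambda x_1,\dots,\lambda x_m\in\{\Gamma\}\}$ for any one solution $(x_i)$ in it. Writing elements of $\{\Gamma\}$ by their exponent vectors in $n_1,\dots,n_s$, the set of admissible $\lambda$ here is a single translate $\lambda_0\{\Gamma\}$ of $\{\Gamma\}$ inside $G$, so every solution in the fibre is a $\{\Gamma\}$-multiple of the distinguished solution $(\lambda_0x_1,\dots,\lambda_0x_m)$. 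Consequently each of the finitely many non-degenerate solutions of the unit equation accounts for just one proportionality class of solutions of \eqref{k2} --- or, under the strictest reading of ``non-proportional'', for only finitely many such, by Dickson's lemma on antichains in $\N_0^{s}$ --- and combining this with the previous paragraph gives the assertion.

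The only genuinely hard input is the finiteness theorem for unit equations, which rests on the Subspace Theorem; once it is granted, exhibiting the finite-rank group $G'$, performing the division trick, and carrying out the bookkeeping around non-degeneracy and proportionality are all routine. That is where I would expect essentially all the difficulty to lie.
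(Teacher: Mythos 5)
Your proposal is correct and rests on exactly the same deep input as the paper, which offers no proof of Theorem C at all but simply cites van der Poorten--Schlickewei and Evertse; your reduction via $w_i=-a_ix_i/(a_mx_m)$ to the finiteness theorem for non-degenerate solutions of unit equations in a finitely generated subgroup of $\Q^{*}$, together with the fibre analysis and Dickson's lemma to handle proportionality by factors from $\{\Gamma\}$, is the standard way those cited results yield the stated version. So this is essentially the paper's (implicit) argument, only with the routine bookkeeping written out.
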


This theorem was proved independently by van der Poorten and Schlickewei \cite{ps1} and Evertse \cite{ev} in a more general form. Later Evertse and Gy\H{o}ry \cite{egy1} showed that the number of non-proportional, non-degenerate solutions of \eqref{k2} can be estimated from above by a constant which depends only on $\Gamma$. For related results, see the paper \cite{ps2} and the book \cite{egy2}.

We shall use the following consequence of Theorem C.

\begin{corollary}
\label{cork}
There exists a finite set $\el$ such that if $x_1,\dots,x_\ell$ are pairwise coprime elements of $\{\Gamma\}$, $y_1,\dots,y_h$ are also pairwise coprime elements of $\{\Gamma\}$ such that $\ell,h\leq k$, $\ell+h\geq 3$ and
\begin{equation}
\label{k3}
\varepsilon(x_1+\dots+x_\ell)-\eta(y_1+\dots+y_h)=0
\end{equation}
with some $\varepsilon,\eta\in\{\Gamma\}$ and without vanishing subsum on the left hand side, then
$$
x_1,\dots,x_\ell,\ y_1,\dots,y_h\in\el.
$$
Further, $\el$ is independent of $\varepsilon,\eta$.
\end{corollary}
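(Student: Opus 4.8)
The plan is to recognise \eqref{k3} as a unit equation over the multiplicative semigroup $\{\Gamma\}$ and to apply Theorem C to it, the pairwise coprimality within each of the two groups of variables then serving to confine all the $x_i$ and $y_j$ to a finite set.

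First I would expand \eqref{k3} in the form
$$
\varepsilon x_1+\dots+\varepsilon x_\ell-\eta y_1-\dots-\eta y_h=0,
$$
and view it as a particular solution of the homogeneous linear equation
$$
z_1+\dots+z_\ell-w_1-\dots-w_h=0\qquad\text{in}\qquad z_1,\dots,z_\ell,w_1,\dots,w_h\in\{\Gamma\},
$$
whose coefficients are all equal to $\pm 1\in\Q\setminus\{0\}$ and whose number of variables is $m=\ell+h\ge 3$. Since $(\ell,h)$ runs through the finitely many pairs with $1\le\ell,h\le k$ and $\ell+h\ge 3$, Theorem C applies to each of these finitely many equations and, taken together, yields a finite list of pairwise non-proportional, non-degenerate solutions. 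Normalising each such representative so that the greatest common divisor of its entries equals $1$, I obtain a finite set $\R$ of tuples; let $\el_0\subset\{\Gamma\}$ denote the finite set of all entries occurring in the members of $\R$. By construction $\el_0$ depends only on $\Gamma$ and $k$, hence is independent of $\varepsilon$ and $\eta$.

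Next, for $\varepsilon,\eta$ and $x_1,\dots,x_\ell$, $y_1,\dots,y_h$ as in the statement, the tuple $(\varepsilon x_1,\dots,\varepsilon x_\ell,\eta y_1,\dots,\eta y_h)$ is a solution of the equation above, and it is non-degenerate precisely because the left-hand side of \eqref{k3} is assumed to have no vanishing subsum. Writing $d\in\{\Gamma\}$ for the greatest common divisor of its entries, the tuple $(\varepsilon x_1/d,\dots,\eta y_h/d)$ lies in $\R$, so every $\varepsilon x_i/d$ and every $\eta y_j/d$ belongs to $\el_0$. The coprimality assumptions are then used to strip off the multipliers: if $\ell\ge 2$ then $d\mid\gcd(\varepsilon x_1,\varepsilon x_2)=\varepsilon\gcd(x_1,x_2)=\varepsilon$, so $\varepsilon/d\in\{\Gamma\}$ and $x_i=(\varepsilon x_i/d)/(\varepsilon/d)$ is a divisor of the element $\varepsilon x_i/d$ of $\el_0$; symmetrically, if $h\ge 2$ then each $y_j$ is a divisor of $\eta y_j/d\in\el_0$. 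Hence, letting $\el$ be the finite set of all positive divisors of elements of $\el_0$, we get $x_i\in\el$ whenever $\ell\ge 2$ and $y_j\in\el$ whenever $h\ge 2$.

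The remaining situation is that of a side consisting of a single term, namely $\ell=1$ (whence $h\ge 2$) or, symmetrically, $h=1$ (whence $\ell\ge 2$), and I expect this to be the delicate part. Here one should first reduce to $\gcd(\varepsilon,\eta)=1$ by cancelling the common factor of $\varepsilon$ and $\eta$, and then argue directly: in the case $\ell=1$ the sum $y_1+\dots+y_h$ already lies in a fixed finite set by the previous paragraph, and a careful analysis of how the multiplier $\eta$ must distribute over the single summand in $\varepsilon x_1=\eta(y_1+\dots+y_h)$ should force $x_1$ itself into a finite set depending only on $\Gamma$ and $k$; the case $h=1$ is analogous. Enlarging $\el$ by these finitely many extra values then completes the argument. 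The main obstacle throughout is controlling the a priori unknown multipliers $\varepsilon,\eta$, and it is exactly the pairwise coprimality within each of the two groups of variables that makes this possible.
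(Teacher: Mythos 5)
Your treatment of the summands lying on a side with at least two terms is correct and is essentially the paper's own argument: Theorem C confines the tuple $(\varepsilon x_1,\dots,\varepsilon x_\ell,\eta y_1,\dots,\eta y_h)$ to finitely many proportionality classes, and the coprimality $\gcd(x_1,x_2)=1$ is exactly what lets you divide out the unknown multiplier $\varepsilon$ (the paper phrases this as $(x_1,\dots,x_\ell)=\nu^*(z_1,\dots,z_\ell)$ with $\nu^*=\nu/\varepsilon$ and concludes in the same way). Your normalisation by the gcd of the entries is a harmless variant.

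The single-term case, however, is a genuine gap: you only assert that ``a careful analysis \dots should force $x_1$ itself into a finite set,'' and no such analysis can exist, because the conclusion fails for the lone summand. Take $\Gamma=\{2,3\}$, $\ell=2$, $h=1$, $x_1=1$, $x_2=2$, $\varepsilon=2^a$, $\eta=1$, $y_1=3\cdot 2^a$. Then $\varepsilon(x_1+x_2)-\eta y_1=3\cdot 2^a-3\cdot 2^a=0$, no proper subsum of the three terms $2^a,\,2^{a+1},\,-3\cdot 2^a$ vanishes, all hypotheses of the corollary hold, and yet $y_1=3\cdot 2^a$ is unbounded; your proposed reduction to $\gcd(\varepsilon,\eta)=1$ does not help, since here $\gcd(2^a,1)=1$ already. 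So no finite $\el$ can capture the single summand on a one-term side. Be aware that the paper's own proof dismisses this case in one sentence (``expressing $y_1$ from \eqref{k3}, the statement immediately follows''), which runs into the same obstruction: $y_1=(\varepsilon/\eta)(x_1+\dots+x_\ell)$ with a bounded sum says nothing about $y_1$ unless $\varepsilon/\eta$ is bounded, which is not assumed. Your instinct in flagging the one-term side as the delicate part is right, but the remedy is to weaken the conclusion to the $x_i$ and $y_j$ on sides with at least two summands (which is what your argument, and the paper's, actually establishes), not to complete the missing analysis.
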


\begin{proof} Without loss of generality we may assume that $\ell\geq 2$. Then Theorem C implies that
$$
(\varepsilon x_1,\dots,\varepsilon x_\ell)=\nu(z_1,\dots,z_\ell),
$$
where $\nu,z_1,\dots,z_\ell\in\{\Gamma\}$, and $z_1,\dots,z_\ell$ belong to a finite set. Hence, as
$$
(x_1,\dots,x_\ell)=\nu^*(z_1,\dots,z_\ell)
$$
with $\nu^*=\nu/\varepsilon$, in view of that $x_1,\dots,x_\ell\in\{\Gamma\}$ are pairwise coprime, we conclude that $x_1,\dots,x_\ell$ belong to a finite set (which is independent of $\varepsilon,\eta$). If we have $h=1$, then expressing $y_1$ from \eqref{k3}, the statement immediately follows. On the other hand, if $h\geq 2$, then applying the above argument for $(\eta y_1,\dots,\eta y_h)$ in place of $(\varepsilon x_1,\dots,\varepsilon x_\ell)$, the statement also follows.
\end{proof}

Now we can prove our Theorem \ref{thm2}. Our argument will give the proof of our statement concerning both $H_k$ and $H_{\leq k}$. First note that there is a constant $C_1$ such that if in $H_k$ (resp. in $H_{\leq k}$) we have
$$
u_1+\dots+u_t>C_1
$$
with $t=k$ (resp. with $2\leq t\leq k$) and $\gcd(u_i,u_j)=1$ for $1\leq i<j\leq t$, then this sum is not contained in $\{\Gamma\}$. This is an immediate consequence of Theorem C.

Assume that contrary that contrary to the statement of the theorem for some $\R$ which is asymptotically equal to one of $H_k$ and $H_{\leq k}$ we have
$$
\R=\A\cdot\B
$$
with
$$
\A,\B\subset\N,\ \ \ |\A|,|\B|\geq 2.
$$
Since both $H_k$ and $H_{\leq k}$ are infinite, so is $\R$, whence at least one of $\A$ and $\B$, say $\B$ is infinite.

We prove that
\begin{equation}
\label{k4}
\A=\{a_0t:t\in T\}
\end{equation}
with some positive integer $a_0$ and $T\subset\{\Gamma\}$, such that $|T|\geq 2$. Indeed, take distinct elements $a_1,a_2\in\A$. Then for all sufficiently large $b\in\B$ we have
\begin{equation}
\label{ax}
r_1:=a_1b=u_1+\dots+u_\ell
\end{equation}
and
\begin{equation}
\label{bx}
r_2:=a_2b=v_1+\dots+v_h
\end{equation}
with some $r_1,r_2\in\R$, $\ell,h\leq k$, and with $u_1,\dots,u_\ell,v_1,\dots,v_h\in\{\Gamma\}$ such that
\begin{equation}
\label{coprime}
\gcd(u_{i_1},u_{i_2})=\gcd(v_{j_1},v_{j_2})=1\ (1\leq i_1<i_2\leq \ell,1\leq j_1<j_2\leq h).
\end{equation}
We infer from \eqref{ax} and \eqref{bx} that
\begin{equation}
\label{ujj}
a_2(u_1+\dots+u_\ell)-a_1(v_1+\dots+v_h)=0.
\end{equation}
Since there are infinitely many $b\in\B$, and we arrive at \eqref{ujj} whenever $b$ is large enough, this equation has infinitely many solutions $u_1,\dots,u_\ell,v_1,\dots,v_h\in\{\Gamma\}$ with the property \eqref{coprime}. However, by Theorem C this can hold only if, after changing the indices if necessary,
\begin{equation}
\label{k9}
a_2u_1=a_1v_1.
\end{equation}
Let $d_1,d_2$ be the maximal positive divisors of $a_1,a_2$ from $\{\Gamma\}$, respectively. Write
\begin{equation}
\label{k9.5}
a_1=a_1'd_1\ \ \ \text{and}\ \ \ a_2=a_2'd_2,
\end{equation}
and observe that by the pairwise coprimality of the elements of $\Gamma$ both $d_1,d_2$ and $a_1',a_2'$ are uniquely determined. In particular, none of $a_1',a_2'$ is divisible by any element of $\Gamma$. Equations \eqref{k9.5} together with \eqref{k9} imply
$$
a_2'd_2u_1=a_1'd_1v_1,
$$
where $d_2u_1,d_1v_1\in\{\Gamma\}$. We know infer that
$$
a_0:=a_1'=a_2'
$$
and
$$
a_1=a_0t_1,\ \ \ a_2=a_0t_2\ \ \ \text{with}\ t_1,t_2\in\{\Gamma\}.
$$
It is important to note that $a_0$ is the greatest positive divisor of $a_1$ (and of $a_2$) which is not divisible by any element of $\Gamma$. Considering now $a_1,a_i$ instead of $a_1,a_2$ for any other $a_i\in \A$, we get in the same way that
$$
a_i=a_0t_i\ \ \ \text{with}\ t_i\in\{\Gamma\}.
$$
This proves \eqref{k4}.

Write $\Gamma=\{n_1,\dots,n_s\}$ and put $m:=\min(s,k)$. Denote by $\R^\circ$ the subset of $\R$ consisting of sums $u_1+\dots+u_k$ with $u_1,\dots,u_m\in\{\Gamma\}\setminus\el$ such that
\begin{equation}
\label{k10}
u_i=
\begin{cases}
n_i^{\alpha_i}&\text{with}\ \alpha_i>1\ \text{for}\ i\leq m,\\
1&\text{for}\ s<i\leq k\ (\text{if}\ s<k).
\end{cases}
\end{equation}
Clearly, $\R^\circ$ is an infinite set. Take $r_1\in\R^\circ$ of the form
$$
r_1=u_1+\dots+u_k
$$
with $u_1,\dots,u_k$ satisfying \eqref{k10}. By what we have already proved, we can write
$$
r_1=a_0t_1b
$$
with some $t_1\in T$ and $b\in\B$. Put $r_2=a_0t_2b$ with some $t_2\in T$, $t_2\neq t_1$ such that $r_2\in\R$. Writing
$$
r_2=v_1+\dots+v_h
$$
with pairwise coprime $v_1,\dots,v_h\in\{\Gamma\}$, we get
\begin{equation}
\label{k11}
t_2(u_1+\dots+u_k)-t_1(v_1+\dots+v_h)=0.
\end{equation}
Recall that by assumption, $u_i\in\{\Gamma\}\setminus\el$ for $i=1,\dots,m$. Hence we must have $h\geq m$, and repeatedly applying Corollary \ref{cork} (after changing the indices if necessary) we get
$$
t_2{u_i}-t_1{v_i}=0\ \ \ (i=1,\dots,m)
$$
whence
$$
\frac{u_1}{v_1}=\dots=\frac{u_m}{v_m},
$$
that is
$$
u_1v_i=v_1u_i\ \ \ (2\leq i\leq m).
$$
If $m>1$, then this by the coprimality of $u_1,\dots,u_k$ and $v_1,\dots,v_k$ gives $u_i=v_i$ $(i=1,\dots,m)$. This is a contradiction, which proves the theorem whenever $m>1$.

So we are left with the only possibility $m=1$, that is, $s=1$. Then, letting $\Gamma=\{n\}$, equation \eqref{k11} reduces to
\begin{equation}
\label{eqc}
t_2n^{\alpha_1}-t_1n^{\alpha_2}=c,
\end{equation}
where $c=t_1w-t_2(k-1)$ with some $0\leq w\leq k-1$. For any fixed $c\neq 0$ the above equation has only finitely many solutions in non-negative integers $\alpha_1,\alpha_2$. Indeed, we may easily bound $\min(\alpha_1,\alpha_2)$ first, and then also $\max(\alpha_1,\alpha_2)$.
Hence we may assume that $c=0$ in \eqref{eqc}. Observe, that in the case of the set $H_k$ we have $w=k-1$, whence we get $t_1=t_2$, a contradiction.

So in what follows, we may assume that we deal with the set $H_{\leq k}$. Observe that for any large $\beta$, both $n^\beta$ and $n^\beta+1$ belong to $\R$. Hence, in view of \eqref{k4} we get $a_0=1$, and all elements of $\A$ are powers of $n$. This implies that $1\in\A$: indeed, since all elements of $\A$ are powers of $n$, we can have $n^\beta+1\in\R$ only if $1\in\A$ (and $n^\beta+1\in\B$). Recall that $|\A|\geq 2$; let $n^\alpha\in\A$ with some $\alpha>0$, and assume that $\alpha$ is minimal with this property. Obviously, for all large $\beta$ we must have $n^\beta+i\in\B$, for all $0\leq i<k$. One of $k-2,k-1$ is not divisible by $n$; write $j$ for this number.
(Note that for $k=2$ we have $j=1$.) Then, for all large $\beta$, we must have $n^\beta+j\in\B$. Consequently, we have
$$
n^{\alpha+\beta}+n^\alpha j\in\R.
$$
However, this implies that
$$
n^\alpha j\leq k-1.
$$
Hence, in view of $j\in\{k-2,k-1\}$ (with $j=1$ for $k=2$) we easily get that the only possibility is given by
$$
n=2,\ \ \ \ \ \alpha=1,\ \ \ \ \ k=3.
$$
Thus the theorem follows.
$\qed$

\end{document}